\title{Weighted 1-dimensional Orlicz-Poincar\'e inequalities}
\author{Lyudmila Korobenko}
\address{Department of Mathematics \\
          Reed College \\
         Portland, OR 97202}
\email{korobenko@reed.edu}
\author{Olly Milshstein}
\address{Department of Mathematics \\
          Reed College \\
         Portland, OR 97202}
\email{milshteo@reed.edu}
\author{Lucas Yong}
\address{Department of Mathematics \\
          University of Oklahoma \\
          Norman, OK 73019}
\email{lucasyong@ou.edu}
\begin{document}
\maketitle
\tableofcontents
\begin{abstract}
In this paper we establish necessary and sufficient conditions for weighted Orlicz-Poincar\'{e} inequalities in dimension one. Our theorems generalize the main results of Chua and Wheeden \cite{chuawheeden}, who established necessary and sufficient conditions for weighted $(q,p)$ Poincar\'{e} inequalities. We give an example of a weight satisfying sufficient conditions for a $(\Phi,p)$ Orlicz-Poincar\'{e} inequality where the gauge norm with respect to $\Phi$ is a bump on the Lebesgue $L^p$ norm. This weight, on the other hand, does not satisfy a $(q,p)$ Poincar\'{e} inequality for any $q>p$.
\end{abstract}
\section{Introduction}

Sobolev spaces on metric measure spaces have been extensively studied \cite{Haj, Haj2, Haj3, Shan, Durand, AlvHaj2, Heik2}. However, there are few easily verifiable conditions on metric measure spaces which are sufficient for such inequalities to hold for a set of test functions (such as Lipschitz functions), see for example \cite{saloff-coste_2001, hinpet,chuawheeden}. On the other hand, Sobolev-Poincar\'{e} inequalities and even sufficiently strong Orlicz-Sobolev inequalities imply certain properties of the underlying metric measure spaces, such as the doubling condition \cite{Haj2, KoMaRi, AlvHaj2, Heik2}.

A particular interest in Sobolev and Poincar\'{e} inequalities arises from the regularity theory for degenerate elliptic operators. More precisely, given a degenerate elliptic operator with a degeneracy controlled by a weight, one can use weighted Sobolev and Poincar\'{e} inequalities to perform the Moser or DeGiorgi iteration scheme to show regularity of weak solutions \cite{FKS}. There is another class of degeneracies, to which one can associate a control metric, and follow a somewhat similar iteration scheme using the theory of Sobolev spaces on metric spaces \cite{FL}. It has also been shown recently that a weaker Orlicz-Sobolev inequality can be used in a modification of the DeGiorgi scheme to show continuity of weak solutions to infinitely degenerate elliptic equations \cite{KRSSh}. This leads to a natural question: what conditions on a metric measure space are sufficient (and necessary) for it to support different versions of Sobolev type inequalities? This is the main question that we partially address in this paper.

More precisely we follow the work of Chua and Wheeden \cite{chuawheeden} to give necessary and sufficient conditions for a one-dimensional Orlicz-Poincar\'{e} inequality, see Theorems \ref{thm:CW1=p}, \ref{thm:CW1<p_forward}, and \ref{thm:CW1<p_backward}. \autoref{thm:CW1=p} provides a necessary and sufficient condition for a $(\Phi, 1)$ Orlicz-Poincar\'{e} inequality, where the gauge norm with respect to $\Phi$ is a bump on the $L^1$ norm. When $p>1$, our necessary condition, Theorem \ref{thm:CW1<p_backward}, just misses to be sufficient. The mismatch comes from the fact that Young functions that appear in Orlicz-Poincar\'{e} inequalities are not multiplicative, unlike power functions appearing in the classical Sobolev and Poincar\'{e} inequalities. We do not know if the gap between our necessary and sufficient conditions can be closed, but we suspect that a different method has to be used. 
Our arguments follow closely those in \cite{chuawheeden} and rely on the clever use of the fundamental theorem of calculus, Minkowski's integral inequality, and Hardy type inequalities. However, some of the tools available for Lebesgue norms, such as Minkowski and Hardy type inequalities, need to be adapted to the setting of Orlicz spaces; see Lemmas \ref{lemma:gen_minkowski} and \ref{lemma:opickufneradaptation1}, for example. This presents some technical difficulties, since we are no longer working with power functions, which are multiplicative, but rather with submultiplicative Young functions. This makes our main results interesting and nontrivial. We now describe them in more detail.

Let $-\infty < a < b < \infty$, $f\colon [a,b] \to \R$ a Lipschitz continuous function, $\mu$ and $w$ weights on $[a,b]$, and $\nu$ a nonnegative finite Borel measure on $[a,b]$ satisfying $\nu[a,b]  > 0$. In \cite{chuawheeden}, the authors obtain necessary and sufficient conditions for the inequality
\begin{equation}
\label{eqn:chuawheedenmain}
\left\|f - \frac{1}{\nu[a,b]}\int_a^b f \dd \nu\right\|_{L_\mu^q[a,b]}
\leq
C \left\|f'\right\|_{L_w^p[a,b]},
\end{equation}
where $C$ is a constant. That is, the inequality above is valid if and only if a special constant $K_{p,q}(\mu,\nu, w)$ is finite. The value of $K_{p,q}(\mu,\nu, w)$ varies depending on the values of $p$ and $q$; we refer the reader to \cite[Theorem 1.4]{chuawheeden} for the details.

We generalize the results in \cite{chuawheeden} by replacing the $L^q$ norm on the lefthand side of \autoref{eqn:chuawheedenmain} with the gauge norm with respect to a Young function $\Phi$, see Section \ref{sec:prelim}. The assumptions on $a,b\in\R$ and measures $\mu,\nu,w$ are as above and are the same throughout the paper unless stated otherwise. Assumptions on the Young function $\Phi$ vary depending the value of $p$ and other details, and hence will be stated in each of the main results, and the respective sections devoted to their proofs.

Our main results are as follows.
\begin{theorem}
\label{thm:CW1=p}
Define 
\[
K_{1,\Phi}(\mu, \nu, w) := 
    \frac{1}{\nu[a,b]}
    \sup_{a<x<b}\left\{
    \frac{1}{w(x)}
    \left\|
    \nu[a,x]\chi_{[x,b]}
    -
    \nu[x,b]\chi_{[a,x]}
    \right\|_{L_\mu^\Phi[a,b]}
    \right\}.
    \]
Then $K_{1,\Phi}(\mu, \nu, w) < \infty$ if and only if
\begin{equation}
\label{eqn:CW1=p}
\left\|f - \frac{1}{\nu[a,b]}\int_a^b f \dd \nu\right\|_{L_\mu^\Phi[a,b]}
\leq
C
\left\|f'\right\|_{L_w^1[a,b]}
\end{equation}
for all Lipschitz continuous functions $f$, and for some constant $C > 0$. Moreover, $K_{1,\Phi}(\mu,\nu,w) \leq C$ for any such constant.
\end{theorem}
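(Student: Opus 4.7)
The plan is to prove both directions by adapting the Chua--Wheeden strategy from \cite{chuawheeden} to the Orlicz setting.

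For sufficiency, assume $K_{1,\Phi}(\mu,\nu,w)<\infty$. The starting identity is
\[
f(y)-\frac{1}{\nu[a,b]}\int_a^b f\,\dd\nu
=\frac{1}{\nu[a,b]}\int_a^b f'(t)\bigl[\nu[a,t]\chi_{[t,b]}(y)-\nu[t,b]\chi_{[a,t]}(y)\bigr]\,\dd t,
\]
obtained by writing $f(y)-f(z)=\int_z^y f'(t)\,\dd t$, averaging against $\dd\nu(z)$, and applying Fubini. I would then take the $L_\mu^\Phi[a,b]$-norm in $y$ and commute it with the $\dd t$-integral via the generalized Minkowski inequality for gauge norms (Lemma \ref{lemma:gen_minkowski}). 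Multiplying and dividing the integrand by $w(t)$, the factor $\nu[a,b]^{-1}w(t)^{-1}\|\nu[a,t]\chi_{[t,b]}-\nu[t,b]\chi_{[a,t]}\|_{L_\mu^\Phi}$ is bounded by $K_{1,\Phi}(\mu,\nu,w)$ for a.e.\ $t$, leaving $K_{1,\Phi}(\mu,\nu,w)\,\|f'\|_{L_w^1[a,b]}$ on the right, which is \autoref{eqn:CW1=p} with $C=K_{1,\Phi}(\mu,\nu,w)$.

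For necessity, assume \autoref{eqn:CW1=p} holds with constant $C$. For each $x\in(a,b)$ and each small $\varepsilon>0$, I would test the inequality against the piecewise-linear function $f_\varepsilon$ that equals $0$ on $[a,x-\varepsilon]$, rises linearly from $0$ to $1$ on $[x-\varepsilon,x]$, and equals $1$ on $[x,b]$. Then $\|f_\varepsilon'\|_{L_w^1}=\varepsilon^{-1}\int_{x-\varepsilon}^x w\,\dd t$, which tends to $w(x)$ at Lebesgue points of $w$, while $f_\varepsilon-\nu[a,b]^{-1}\int f_\varepsilon\,\dd\nu$ converges pointwise a.e.\ to $\nu[a,b]^{-1}\bigl(\nu[a,x]\chi_{[x,b]}-\nu[x,b]\chi_{[a,x]}\bigr)$. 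Lower semicontinuity of the Luxemburg functional under a.e.\ convergence (Fatou-type) passes the bound to the limit, and taking the supremum over such $x$ yields $K_{1,\Phi}(\mu,\nu,w)\leq C$.

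The principal technical obstacle is the Orlicz-valued Minkowski inequality used in the sufficiency step: for $L^q$ the classical Minkowski inequality is immediate, but for a general Young function $\Phi$ the gauge norm is not an explicit power of an integral, and one has to work at the level of the Luxemburg modular and exploit convexity of $\Phi$. Minor technicalities include handling potential atoms of $\nu$ or $\mu$ at $x$ when taking $\varepsilon\to 0$, and verifying that the supremum in the definition of $K_{1,\Phi}$ is unchanged when restricted to the full-measure set of Lebesgue points of $w$; both are standard.
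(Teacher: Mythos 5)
Your proposal is correct, and the sufficiency direction coincides with the paper's: the same Chua--Wheeden representation formula followed by Lemma \ref{lemma:gen_minkowski} and the multiply-and-divide-by-$w(t)$ trick. One quantitative caveat there: Lemma \ref{lemma:gen_minkowski} costs a factor of $2$ (from the gauge/Orlicz norm equivalence), so this route yields $C = 2K_{1,\Phi}(\mu,\nu,w)$, not $C=K_{1,\Phi}(\mu,\nu,w)$ as you assert; this is harmless for the theorem as stated, which only needs some finite $C$.

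Your necessity argument, however, takes a genuinely different and somewhat leaner route than the paper's. You test against the plain ramp ($0$ on $[a,x-\varepsilon]$, linear up to $1$ at $x$), so that $\|f_\varepsilon'\|_{L_w^1}=\varepsilon^{-1}\int_{x-\varepsilon}^x w\to w(x)$ at Lebesgue points of $w$, and you pass to the limit on the left side via lower semicontinuity of the Luxemburg norm under a.e.\ convergence (Fatou applied to the modular); this is valid and gives $\nu[a,b]^{-1}w(x)^{-1}\|\nu[a,x]\chi_{[x,b]}-\nu[x,b]\chi_{[a,x]}\|_{L_\mu^\Phi}\le C$ directly. The paper instead regularizes the weight as $w_n=w+\tfrac1n$, uses a two-sided ramp whose slope is proportional to $1/(\varepsilon w_n(t))$ so that $\|f_\varepsilon'\|_{L_{w_n}^1}$ is exactly $\nu[a,b]/\nu[a,\alpha]$ for every $\varepsilon$, computes the exact limit of the modular (rather than a one-sided Fatou bound), and only at the end discards a sign to compare with $K_{1,\Phi}$. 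What your version buys is brevity and no need for the $w_n$ regularization or the exact limit computation; what the paper's version buys is an identity for $\lim_\varepsilon\|f_\varepsilon-\mathrm{avg}\|$ and avoidance of division by a possibly vanishing $w(x)$. Both arguments share the same residual technicalities, which you correctly flag: the admissible points $x$ must satisfy $\nu\{x\}=0$ (otherwise your claimed pointwise limit $\nu[a,b]^{-1}(\nu[a,x]\chi_{[x,b]}-\nu[x,b]\chi_{[a,x]})$ is off by the atom) and must be Lebesgue points of $w$, so the supremum defining $K_{1,\Phi}$ is really being controlled only off a negligible set --- a gap the paper's own proof also glosses over.
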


\begin{theorem}
\label{thm:CW1<p_forward}
Let $1 < p  < \infty$. Assume that $\Phi$ is submultiplicative and invertible on $[0,\infty)$, and that that the function $\Lambda$ defined by $\Lambda(t) = \Phi\left(t^{\frac{1}{p}}\right)$ is convex.

Define
    \[
    \begin{aligned}
    K_{p,\Phi}(\mu, \nu, w) := 
    \frac{1}{\nu[a,b]}
    &\Biggl(
    \sup_{a<x<b}\biggl\{
    \left[\Phi^{-1}\left(\frac{1}{\mu[x,b]^{1/2}}\right)\right]^{-2}\biggl(\int_a^x \nu[a,t]^{p'}w(t)^{1-p'} \dd t\biggr)^{1/p'}
    \biggr\}\\
    &+
    \sup_{a<x<b}\biggl\{
    \left[\Phi^{-1}\left(\frac{1}{\mu[a,x]^{1/2}}\right)\right]^{-2}\biggl(
    \int_x^b \nu[t,b]^{p'} w(t)^{1-p'} \dd t
    \biggr)^{1/p'}
    \biggr\}
    \Biggr).
    \end{aligned}
    \]
Assume that $K_{p,\Phi}(\mu, \nu, w) < \infty$. Then,
\begin{equation}
\label{eqn:CW1<p}
\left\|f - \frac{1}{\nu[a,b]}\int_a^b f \;\dd\nu\right\|_{L_\mu^\Phi[a,b]} 
\leq 
C
\|f'\|_{L_w^p[a,b]},
\end{equation}
for all Lipschitz continuous functions $f$, and for some constant $C \leq C_0(\Phi) K_{p,\Phi}(\mu,\nu,w)$, where
\[
C_0(\Phi) := 2
\left[\Phi^{-1}\left(\frac{1}{2}\right)\right]^{-1}.
\]
\end{theorem}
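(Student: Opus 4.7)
The plan is to mimic, in the Orlicz setting, the Chua--Wheeden decomposition-plus-Hardy argument: rewrite the Poincar\'{e} integrand as a double integral against $f'$ via the fundamental theorem of calculus, and estimate each of the resulting one-sided pieces by the weighted Orlicz--Hardy inequality already established as Lemma \ref{lemma:opickufneradaptation1}.

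Concretely, I would start from
\[
f(x) - \frac{1}{\nu[a,b]}\int_a^b f\,\dd\nu = \frac{1}{\nu[a,b]}\int_a^b \bigl(f(x) - f(y)\bigr)\,\dd\nu(y),
\]
express $f(x) - f(y)$ as an integral of $f'$ between the two points, split the $y$-integration at $x$, and apply Fubini to obtain
\[
f(x) - \frac{1}{\nu[a,b]}\int_a^b f\,\dd\nu = \frac{1}{\nu[a,b]}\left[\int_a^x f'(t)\,\nu[a,t]\,\dd t - \int_x^b f'(t)\,\nu[t,b]\,\dd t\right].
\]
The triangle inequality for the gauge norm then reduces the theorem to bounding the two Hardy-type expressions $\bigl\|\int_a^{(\cdot)}|f'|\,\nu[a,t]\,\dd t\bigr\|_{L_\mu^\Phi[a,b]}$ and $\bigl\|\int_{(\cdot)}^b |f'|\,\nu[t,b]\,\dd t\bigr\|_{L_\mu^\Phi[a,b]}$ by $\|f'\|_{L_w^p[a,b]}$ times the two suprema appearing in $K_{p,\Phi}(\mu,\nu,w)$.

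For the first expression I would apply Lemma \ref{lemma:opickufneradaptation1} to the left-to-right Hardy operator $h \mapsto \int_a^{(\cdot)} h(t)\,\dd t$ after absorbing the kernel $\nu[a,t]$ into the weight via the substitution $\widetilde w(t) := w(t)/\nu[a,t]^p$. A short computation, using the identity $p(p'-1)=p'$, shows that the Muckenhoupt-type factor $\bigl(\int_a^x \widetilde w^{1-p'}\,\dd t\bigr)^{1/p'}$ is exactly the first supremand $\bigl(\int_a^x \nu[a,t]^{p'}w(t)^{1-p'}\,\dd t\bigr)^{1/p'}$, while $\|f'\nu[a,\cdot]\|_{L_{\widetilde w}^p[a,b]}=\|f'\|_{L_w^p[a,b]}$; the lemma then yields the first supremum of $K_{p,\Phi}$ together with the prefactor $C_0(\Phi)$. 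Reversing the orientation of the interval (so that $\int_x^b$ plays the role of $\int_a^x$, $\nu[t,b]$ that of $\nu[a,t]$, and $\mu[a,x]$ that of $\mu[x,b]$) handles the second expression in the same manner. Summing the two estimates and dividing by $\nu[a,b]$ gives \autoref{eqn:CW1<p} with $C \leq C_0(\Phi) K_{p,\Phi}(\mu,\nu,w)$.

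The genuine technical difficulty has already been concentrated in Lemma \ref{lemma:opickufneradaptation1}, where the submultiplicativity of $\Phi$ and the convexity of $\Lambda(t)=\Phi(t^{1/p})$ replace the multiplicative arithmetic of power functions used in \cite{chuawheeden}; these hypotheses are precisely what produces the squared form $[\Phi^{-1}(1/\mu[x,b]^{1/2})]^{-2}$, which collapses to the familiar $\mu[x,b]^{1/q}$ when $\Phi(t)=t^q$. Given the lemma, the main obstacle at the level of the present theorem is purely bookkeeping: keeping track of the two Hardy directions and verifying that the change of weight $\widetilde w = w/\nu[\cdot]^p$ generates exactly the quantities appearing in $K_{p,\Phi}(\mu,\nu,w)$, with no further appeal to the structural hypotheses on $\Phi$ required at this stage.
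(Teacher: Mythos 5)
Your proposal is correct and follows essentially the same route as the paper: the fundamental-theorem-of-calculus decomposition into the two one-sided Hardy terms, the triangle inequality, and then Lemma \ref{lemma:opickufneradaptation1} (and its mirror) applied after the substitution $f \mapsto f\tau$, $w \mapsto w/\tau^p$ with $\tau = \nu[a,\cdot]$ and $\tau = \nu[\cdot,b]$ — which is precisely the content of the paper's Corollary \ref{cor:CWintermediate}. Your bookkeeping with $p(p'-1)=p'$ and the resulting identification of the Muckenhoupt-type factors with the two supremands of $K_{p,\Phi}(\mu,\nu,w)$ is exactly right.
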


\begin{theorem}
\label{thm:CW1<p_backward}
Assume that $\Phi$ is invertible on $[0,\infty)$. Define
\[
\begin{aligned}
\tilde{K}_{p,\Phi}(\mu, \nu, w) := 
\frac{1}{\nu[a,b]}
&\Biggl(
\sup_{a<x<b}\biggl\{
\left[\Phi^{-1}\left(\frac{1}{\mu[x,b]}\right)\right]^{-1}\biggl(\int_a^x \nu[a,t]^{p'}w(t)^{1-p'} \dd t\biggr)^{1/p'}\biggr\}\\
&+
\sup_{a<x<b}\biggl\{
\left[\Phi^{-1}\left(\frac{1}{\mu[a,x]}\right)\right]^{-1}\biggl(
\int_x^b \nu[t,b]^{p'} w(t)^{1-p'} \dd t
\biggr)^{1/p'}
\biggr\}
\Biggr).
\end{aligned}
\]
Assume that
\[
\left\|f - \frac{1}{\nu[a,b]}\int_a^b f \dd \nu\right\|_{L_\mu^\Phi[a,b]}
\leq
C 
\left\|f'\right\|_{L_w^p[a,b]},
\]
for all Lipschitz continuous functions $f$ and for some constant $C$. Then, $\tilde{K}_{p,\Phi}(\mu, \nu, w) < \infty$.
\end{theorem}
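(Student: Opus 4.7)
The plan is to test the assumed Poincar\'e inequality against a family of Lipschitz functions carefully designed to extract each of the two supremum terms in $\tilde K_{p,\Phi}$. Fix $x\in(a,b)$ and attack the first summand. Following the strategy of \cite{chuawheeden}, I choose $f$ whose derivative is supported on $[a,x]$, so that $f$ is constant on $[x,b]$; the left-hand side of the inequality is then bounded below by the gauge norm of a constant multiple of $\chi_{[x,b]}$, which via the elementary identity $\|c\chi_E\|_{L^\Phi_\mu}=|c|/\Phi^{-1}(1/\mu(E))$ produces exactly the required factor $\Phi^{-1}(1/\mu[x,b])^{-1}$. The function $f'$ is taken to be the H\"older-extremal element for the pairing of $L^p_w$ with $L^{p'}_{w^{1-p'}}$ against $\nu[a,\cdot]$, so that dividing matching powers of $\int_a^x\nu[a,t]^{p'}w^{1-p'}\dd t$ leaves exactly the $1/p'$ power appearing in $\tilde K_{p,\Phi}$.

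Explicitly, for each $N\geq 1$ I set
\[
E_N:=\{t\in[a,x]:\nu[a,t]^{p'-1}w(t)^{1-p'}\leq N\},\qquad g_N(t):=\nu[a,t]^{p'-1}w(t)^{1-p'}\chi_{E_N}(t),
\]
and define $f_N(y):=\int_a^{\min(y,x)}g_N(t)\dd t$. Since $g_N\leq N$ is supported in $[a,x]$, $f_N$ is Lipschitz with $f_N'=g_N\chi_{[a,x]}$ almost everywhere, and using $p(p'-1)=p'$ and $p(1-p')+1=1-p'$ one gets
\[
\|f_N'\|_{L^p_w[a,b]}^{p}=\int_{E_N}\nu[a,t]^{p'}w(t)^{1-p'}\dd t=:J_N.
\]
A Fubini computation gives $\int_a^b f_N\dd\nu=\int_a^x g_N(t)\,\nu[t,b]\dd t$, so writing $I_N:=\nu[a,b]^{-1}\int_a^b f_N\dd\nu$, the function $f_N-I_N$ is constant on $[x,b]$ with value $c_N=\nu[a,b]^{-1}\int_a^x g_N(t)\nu[a,t]\dd t=J_N/\nu[a,b]$.

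By monotonicity of the Orlicz gauge norm,
\[
\|f_N-I_N\|_{L^\Phi_\mu[a,b]}\geq\|c_N\chi_{[x,b]}\|_{L^\Phi_\mu[a,b]}=\frac{J_N}{\nu[a,b]\,\Phi^{-1}(1/\mu[x,b])}.
\]
Plugging $f_N$ into the assumed inequality and dividing both sides by $J_N^{1/p}$ yields
\[
\frac{J_N^{1/p'}}{\Phi^{-1}(1/\mu[x,b])}\leq C\,\nu[a,b].
\]
Since $E_N\uparrow[a,x]$ almost everywhere as $N\to\infty$, monotone convergence gives $J_N\uparrow\int_a^x\nu[a,t]^{p'}w^{1-p'}\dd t$; the limiting integral is therefore finite, and the displayed bound persists. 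Taking the supremum over $x\in(a,b)$ controls the first summand of $\tilde K_{p,\Phi}$ by $C\,\nu[a,b]$.

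The second summand is handled by the mirror construction: replace $g_N$ by $h_N(t):=\nu[t,b]^{p'-1}w(t)^{1-p'}\chi_{F_N}(t)$ for the analogous truncation set $F_N\subset[x,b]$, and define $\tilde f_N(y):=\int_{\max(y,x)}^b h_N(t)\dd t$. Interchanging the roles of $[a,x]$ and $[x,b]$ and of $\nu[a,t]$ and $\nu[t,b]$, the same computation produces the matching bound with $\Phi^{-1}(1/\mu[a,x])^{-1}$, and summing gives $\tilde K_{p,\Phi}(\mu,\nu,w)\leq 2C<\infty$. The main technical point is to ensure that $f_N$ is an admissible (Lipschitz) test function while still recovering the full target integral in the limit; the H\"older-extremal choice of $g$ makes $\|f_N'\|_{L^p_w}^p$ collapse exactly to $J_N$ on the truncation set $E_N$, so the passage $N\to\infty$ goes through by monotone convergence with no residual error, which is the key reason why the necessary condition $\tilde K_{p,\Phi}<\infty$ comes out clean even though the sufficient condition $K_{p,\Phi}<\infty$ in Theorem~\ref{thm:CW1<p_forward} requires the stronger exponent $1/2$ inside $\Phi^{-1}$.
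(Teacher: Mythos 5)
Your proposal is correct and follows essentially the same route as the paper: test the inequality against the H\"older-extremal function with derivative $\nu[a,t]^{p'-1}w(t)^{1-p'}\chi_{[a,x]}(t)$, use that the resulting function is constant on $[x,b]$ together with $\|c\chi_E\|_{L^\Phi_\mu}=|c|\,[\Phi^{-1}(1/\mu(E))]^{-1}$, and divide by $\|f'\|_{L^p_w}$ to leave the $1/p'$ power. The only (immaterial) difference is the regularization: you truncate the integrand on the level sets $E_N$ and pass to the limit by monotone convergence, whereas the paper replaces $w$ by $w_n=w+\tfrac1n$ and lets $n\to\infty$.
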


\begin{remark}
The reader should compare the constant $K_{p,\Phi}(\mu, \nu, w)$ in \autoref{thm:CW1<p_forward} to the constant $\tilde{K}_{p,\Phi}(\mu, \nu, w)$ in \autoref{thm:CW1<p_backward}. We remark that it is always true that 
\[
K_{p,\Phi}(\mu, \nu, w) \geq \tilde{K}_{p,\Phi}(\mu, \nu, w),
\]
with equality in certain special cases; for example when we choose $\Phi(t) = |t|^q$ for $p \leq q$ (for this choice of $\Phi$, $K_{p,\Phi} = \tilde{K}_{p,\Phi}$ is equal to the constant in \cite[Theorem 1.4]{chuawheeden}). At the time of writing, we are uncertain about whether the gap between the two constants can be closed for arbitrary $\Phi$, but we suspect that different methods must be used to achieve this result.
\end{remark}
This paper is organized as follows. After some preliminaries in Section \ref{sec:prelim}, we prove necessary and sufficient conditions for the inequality \autoref{eqn:CW1=p}, (i.e., the case when $p=1$) in Section \ref{sec:p=1}. This closely follows the proof of \cite[Theorem 1.4]{chuawheeden}. Sections \ref{sec:p-suff} and \ref{sec:p-necc} are concerned with the proofs of \autoref{thm:CW1<p_forward} and \autoref{thm:CW1<p_backward} (i.e., when $p > 1$); these again employ the techniques used in \cite{chuawheeden}, and also use ideas from \cite{opickufner}. Finally, in Section 6, we provide an example of a weight $w$ where the gauge norm is a bump on the the Lesbesgue $L^p$ norm, but which does not satisfy a $(q,p)$ Poincar\'e inequality for any $q > p$.
\section{Preliminaries}
\label{sec:prelim}
In this section, we review some important aspects of the theory of Orlicz spaces. We refer the reader to \cite{raoren} for a comprehensive introduction.
\begin{defn}
A \textbf{Young function} is a convex function $\Phi\colon \R \to [0, \infty]$ such that
\begin{enumerate}[label=(\roman*)]
    \item $\Phi$ is even, i.e. $\Phi(-t) = \Phi(t)$,
    \item $\Phi(0) = 0$,
    \item $\lim_{t \to \infty} \Phi(t) = +\infty$.
\end{enumerate}
\end{defn}
\begin{defn}
Given a Young function $\Phi$, the \textbf{complementary function} to $\Phi$ is another convex function $\Psi \colon \R \to [0,\infty]$, defined by
\[
\Psi(s) = \sup\{t|s| - \Phi(t) : t \geq 0\}.
\]
\end{defn}
The reader may verify that $\Psi$ is itself a Young function, and so the pair $(\Phi, \Psi)$ may be called a complementary pair (of Young functions).
For the remainder of this section, let $(\Phi, \Psi)$ be a complementary pair of Young functions, and let $(\Omega, \Sigma, \mu)$ be an arbitrary measure space. 
\begin{defn}
Define
\[
L_\mu^\Phi := \left\{f \colon \Omega \to \overline{\R}: f \text{ is measurable and } \int_\Omega \Phi(\alpha f) \dd\mu < \infty \text{ for some } \alpha > 0\right\}.
\]
\end{defn}
The set $L_\mu^\Phi$ is a vector space. We now define a norm on $L_\mu^\Phi$.
\begin{defn}
Let $f \in L_\mu^\Phi$, and define $\|~\|_{L_\mu^\Phi} \colon L_\mu^\Phi \to [0,\infty]$ by
\[
\|f\|_{L_\mu^\Phi} := \inf\left\{k > 0 : \int_\Omega \Phi\left(\frac{f}{k}\right)\dd \mu \leq 1\right\}.
\]
$\|~\|_{L_\mu^\Phi}$ is called the \textbf{gauge norm} (or \emph{Luxemburg} norm).
\end{defn}
The gauge norm makes $L_\mu^\Phi$ a Banach space; see \cite{raoren} for details. Note that we may also define $L_\mu^\Phi$ in terms of this norm, i.e.
\[
L_\mu^\Phi = \left\{f \colon \Omega \to \overline{\R}: f \text{ is measurable and } \|f\|_{L_\mu^\Phi} < \infty\right\}.
\]
\begin{example}
Let $\Phi(t) = |t|^p$, where $p \geq 1$. Then
\[
\begin{aligned}
\|f\|_{L_\mu^\Phi} &= \inf\left\{k > 0 : \int_\Omega \Phi\left(\frac{f}{k}\right)\dd \mu \leq 1\right\}\\
&= \inf\left\{k > 0 : \int_\Omega \frac{|f|^p}{k^p}\dd \mu \leq 1\right\}\\
&= \inf\left\{k > 0 : \int_\Omega |f|^p\dd \mu \leq k^p\right\}\\
&= \inf\left\{k > 0 : \left(\int_\Omega |f|^p\dd \mu\right)^{1/p} \leq k\right\}\\
&= \left(\int_\Omega |f|^p\dd \mu\right)^{1/p},
\end{aligned}
\]
recovering the $L^p$-norm. Thus, for this choice of $\Phi$, we have that $L_\mu^\Phi = L_\mu^p$. In this sense, Orlicz spaces are a generalization of $L^p$ spaces.
\end{example}
\begin{defn}
Let $(\Phi, \Psi)$ be a complementary Young pair. The \textbf{Orlicz norm} is
\[
|f|_{L_\mu^\Phi} := \sup_{g \in L_\mu^\Psi}\left\{\int_\Omega |fg| \dd \mu : \|g\|_{L_\mu^\Psi} \leq 1\right\}.
\]
\end{defn}
In fact, the gauge and Orlicz norms are equivalent, a fact that we state without proof (see \cite[Proposition 3.3.4]{raoren}).
\begin{prop}
\label{prop:equivnorms}
For any $f \in L_\mu^\Phi$,
\[
\|f\|_{L_\mu^\Phi} \leq |f|_{L_\mu^\Phi} \leq 2\|f\|_{L_\mu^\Phi}.
\]
\end{prop}
We conclude this section with a generalization of Minkowski's inequality for integrals, which will be important in the proofs contained in the next section.
\begin{lemma}
\label{lemma:gen_minkowski}
Let $F \colon \R \times \R \to \R$ be a measurable function. Then,
\[
\left\|\int F(\bullet,t) \dd t\right\|_{L_\mu^\Phi} 
\leq 2\int \left\|F(\bullet,t)\right\|_{L_\mu^\Phi}  \dd t
\]
\end{lemma}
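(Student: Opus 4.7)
The plan is to bootstrap from the ordinary Minkowski integral inequality by passing through the Orlicz norm $|\cdot|_{L_\mu^\Phi}$, which is linear in a ``duality'' sense and hence behaves well under Fubini, and then converting back to the gauge norm via \autoref{prop:equivnorms}. The factor of $2$ in the statement is exactly the price of this conversion.

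First, by \autoref{prop:equivnorms}, I would bound
\[
\left\|\int F(\bullet,t)\dd t\right\|_{L_\mu^\Phi}
\leq
\left|\int F(\bullet,t)\dd t\right|_{L_\mu^\Phi}
=
\sup_{\|g\|_{L_\mu^\Psi}\leq 1}\int_\Omega \left|\int F(x,t)\dd t\right| |g(x)|\dd\mu(x).
\]
Next, for each admissible $g$, I would apply the triangle inequality inside the absolute value and then Tonelli's theorem to swap the order of integration:
\[
\int_\Omega \left|\int F(x,t)\dd t\right| |g(x)|\dd\mu(x)
\leq
\int\left(\int_\Omega |F(x,t)||g(x)|\dd\mu(x)\right)\dd t.
\]
For a fixed $t$, the inner integral is at most $|F(\bullet,t)|_{L_\mu^\Phi}\|g\|_{L_\mu^\Psi}\leq |F(\bullet,t)|_{L_\mu^\Phi}$ by the definition of the Orlicz norm (together with the H\"older-type inequality encoded in that definition). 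Taking the supremum over $g$ on the left, I would obtain
\[
\left|\int F(\bullet,t)\dd t\right|_{L_\mu^\Phi}
\leq
\int \left|F(\bullet,t)\right|_{L_\mu^\Phi}\dd t.
\]

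Finally, applying the right half of \autoref{prop:equivnorms} inside the integral yields $|F(\bullet,t)|_{L_\mu^\Phi}\leq 2\|F(\bullet,t)\|_{L_\mu^\Phi}$, and combining with the chain above gives the claimed inequality with constant $2$. The only technical caveat is measurability of $(x,t)\mapsto |F(x,t)||g(x)|$ and the justification for Tonelli, which follows from the hypothesis that $F$ is measurable on $\R\times\R$; I do not anticipate a serious obstacle here. The single mildly delicate point worth spelling out is that if $\int \|F(\bullet,t)\|_{L_\mu^\Phi}\dd t=\infty$ the statement is vacuous, and otherwise the finiteness of this integral legitimizes all the interchanges and ensures the suprema are taken over a nonempty set of admissible $g$. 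No submultiplicativity or invertibility of $\Phi$ is required, so the lemma holds for an arbitrary Young function.
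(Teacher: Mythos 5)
Your proposal is correct and follows essentially the same route as the paper: pass to the Orlicz norm via \autoref{prop:equivnorms}, interchange the integrals by Fubini--Tonelli, bound the inner integral by $|F(\bullet,t)|_{L_\mu^\Phi}$ using the definition of the Orlicz norm, and convert back to the gauge norm at the cost of the factor $2$. Your version is in fact slightly more careful than the paper's (which omits the triangle inequality and the absolute values on $g$), but the argument is the same.
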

\begin{proof}
Let $\Psi$ be the complementary function to $\Phi$. Using \autoref{prop:equivnorms} and Fubini's theorem, we have
\[
\begin{aligned}
\left\|\int F(\bullet,t) \dd t\right\|_{L_\mu^\Phi} 
&\leq \left|\int F(\bullet,t) \dd t\right|_{L_\mu^\Phi}\\
&= \sup_{\substack{g \in L_\mu^{\Psi};\\ \left\|g\right\|_{L_\mu^\Psi} \leq 1}}\left(\iint F(x,t) g(x) \mu(x) \dd t \dd x\right)\\
&= \sup_{\substack{g \in L_\mu^{\Psi};\\ \left\|g\right\|_{L_\mu^\Psi} \leq 1}}\left(\iint F(x,t) g(x) \mu(x) \dd x \dd t\right)\\
&\leq \int\sup_{\substack{g \in L_\mu^{\Psi};\\ \left\|g\right\|_{L_\mu^\Psi} \leq 1}}\left(\int F(x,t) g(x) \mu(x) \dd x \right)\dd t\\
&= \int \left|F(\bullet,t)\right|_{L_\mu^\Phi} \dd t\\
&\leq 2\int \left\|F(\bullet,t)\right\|_{L_\mu^\Phi}  \dd t,
\end{aligned}
\]
as desired.
\end{proof}
\section{Necessity and sufficiency when \texorpdfstring{$p=1$}{p=1}}\label{sec:p=1}
We prove \autoref{thm:CW1=p} by modifying the argument in the proof of \cite[Theorem 1.4]{chuawheeden}.
\begin{proof}
(of \autoref{thm:CW1=p}) Assume that $K_{1,\Phi}(\mu,\nu,w) < \infty$. By \cite[Equation 2.1]{chuawheeden},
\[
f(x) - \frac{1}{\nu[a,b]}\int_a^b f \dd \nu 
=
\frac{1}{\nu[a,b]}
\left(\int_a^b \nu[a,z] f'(z) \chi_{[a,x]}(z) - \nu[z,b] f'(z) \chi_{[x,b]}(z) \dd z\right).
\]
for all $x \in [a,b]$. Using \autoref{lemma:gen_minkowski}, we have
\[
\begin{aligned}
\biggl\| f - \frac{1}{\nu[a,b]}\int_a^b f \dd \nu  \biggr\|_{L_\mu^\Phi[a,b]}
&=
\frac{1}{\nu[a,b]}\biggl
\| \int_a^b \nu[a,z] f'(z) \chi_{[a,\bullet]}(z) - \nu[z,b] f'(z) \chi_{[\bullet,b]}(z) \dd z\biggr\|_{L_\mu^\Phi[a,b]}\\
&\leq
\frac{2}{\nu[a,b]}\int_a^b\biggl
\|f'(z)\left(\nu[a,z]\chi_{[a,\bullet]}(z) - \nu[z,b] \chi_{[\bullet,b]}(z)\right) \biggr\|_{L_\mu^\Phi[a,b]}\dd z\\
&= 
\frac{2}{\nu[a,b]}\int_a^b\biggl
\|\nu[a,z]\chi_{[a,\bullet]}(z) - \nu[z,b] \chi_{[\bullet,b]}(z) \biggr\|_{L_\mu^\Phi[a,b]} |f'(z)| \dd z\\
&= 
\frac{2}{\nu[a,b]}\int_a^b\biggl
\|\nu[a,z]\chi_{[z,b]} - \nu[z,b] \chi_{[a,z]} \biggr\|_{L_\mu^\Phi[a,b]} |f'(z)|\dd z\\
&= 
\frac{2}{\nu[a,b]}
\int_a^b \frac{1}{w(z)}
\biggl\|\nu[a,z]\chi_{[z,b]} - \nu[z,b] \chi_{[a,z]} \biggr\|_{L_\mu^\Phi[a,b]} 
|f'(z)| w(z) \dd z\\
&\leq 
\frac{2}{\nu[a,b]}
\int_a^b \sup_{a<x<b}\left\{\frac{1}{w(x)}
\biggl\|\nu[a,x]\chi_{[x,b]} - \nu[x,b] \chi_{[a,x]} \biggr\|_{L_\mu^\Phi[a,b]}\right\} 
|f'(z)| w(z) \dd z\\
&= 2K_{1,\Phi}(\mu,\nu,w) \int_a^b |f'(z)| w(z) \dd z\\
&= 2K_{1,\Phi}(\mu,\nu,w) \left\|f'\right\|_{L_w^1[a,b]}.
\end{aligned}
\]

\vspace{+0.2in}
Conversely, assume that there exists $C > 0$ such that \autoref{eqn:CW1=p} holds for all Lipschitz continuous functions $f \colon [a,b] \to \R$.

For all $n\in\Z^+$, define $w_n(x) = w(x) + \frac{1}{n}$. Let $\alpha$ be a Lebesgue point of $\frac{1}{w_n}$ such that $a<\alpha<b$ and $\nu[a,\alpha] + \nu[\alpha,b] = \nu[a,b]$. Note that the latter equality holds except for at most countably many $\alpha$, and the Lebesgue differentiation theorem guarantees that almost all $\alpha$ are Lebesgue points. 

Next, for any $0 < \varepsilon < \min\{\alpha-a, b-\alpha\}$, define
\[
f_{\varepsilon}(x) = 
\begin{cases}
-\int_x^\alpha \frac{\nu[\alpha,b]}{\nu[a,\alpha]}\frac{\chi_{[\alpha-\varepsilon, \alpha]}(t)}{\varepsilon w_n(t)} \dd t,
&\text{if $a\leq x \leq \alpha$;}\\
\int_\alpha^x \frac{\chi_{[\alpha, \alpha+\varepsilon]}(t)}{\varepsilon w_n(t)} \dd t,
&\text{if $\alpha < x \leq b$.}\\
\end{cases}
\]
We claim that
\begin{equation}
\label{eqn:p=1helper}
\lim_{\varepsilon \to 0}
\left\|f_\varepsilon - \frac{1}{\nu[a,b]} \int_a^b f_\varepsilon \dd \nu \right\|_{L_\mu^\Phi[a,b]}
=
\frac{1}{w_n(\alpha)} 
\left\|\frac{\nu[a,b]}{\nu[a,\alpha]} \chi_{[a,\alpha]} + \chi_{[\alpha,b]} \right\|_{L_\mu^\Phi[a,b]}.
\end{equation}
First note that, using the fact that $\alpha$ is a Lebesgue point of $\frac{1}{w_n}$, we have that as $\varepsilon \to 0$,
\[
\frac{1}{\nu[a,b]} \int_a^b f_\varepsilon \dd \nu \to 0.
\]
Thus, we need only consider $\lim_{\varepsilon \to 0}
\left\|f_\varepsilon \right\|_{L_\mu^\Phi[a,b]}$ in order to prove \autoref{eqn:p=1helper}.
By definition,
\[
\left\|f_\varepsilon\right\|_{L_\mu^\Phi[a,b]}
=
\inf\left\{k > 0 : \int_a^b\Phi\left(\frac{f_\varepsilon}{k}\right) \dd \mu \leq 1\right\}.
\]
Notice that
\[
\begin{aligned}
\int_a^b\Phi\left(\frac{f_\varepsilon(x)}{k}\right) \mu(x) \dd x
=& \int_a^\alpha\Phi\left(\frac{f_\varepsilon(x)}{k}\right) \mu(x) \dd x
+ \int_\alpha^b\Phi\left(\frac{f_\varepsilon(x)}{k}\right) \mu(x) \dd x\\
=& \int_a^\alpha\Phi\left(\frac{1}{k}\int_x^\alpha \frac{\nu[\alpha,b]}{\nu[a,\alpha]}\frac{\chi_{[\alpha-\varepsilon, \alpha]}(t)}{\varepsilon w_n(t)} \dd t\right) \mu(x) \dd x\\
&+ \int_\alpha^b\Phi\left(\frac{1}{k}\int_\alpha^x \frac{\chi_{[\alpha, \alpha+\varepsilon]}(t)}{\varepsilon w_n(t)} \dd t\right) \mu(x) \dd x\\
=& \int_a^{\alpha-\varepsilon}\Phi\left(\frac{1}{k}\int_{\alpha-\varepsilon}^\alpha \frac{\nu[\alpha,b]}{\nu[a,\alpha]}\frac{1}{\varepsilon w_n(t)} \dd t\right) \mu(x) \dd x\\
&+ \int_{\alpha-\varepsilon}^\alpha\Phi\left(\frac{1}{k}\int_x^\alpha \frac{\nu[\alpha,b]}{\nu[a,\alpha]}\frac{\chi_{[\alpha-\varepsilon, \alpha]}(t)}{\varepsilon w_n(t)} \dd t\right) \mu(x) \dd x\\
&+ \int_\alpha^{\alpha + \varepsilon}\Phi\left(\frac{1}{k}\int_\alpha^x \frac{\chi_{[\alpha, \alpha+\varepsilon]}(t)}{\varepsilon w_n(t)} \dd t\right) \mu(x) \dd x\\
&+ \int_{\alpha + \varepsilon}^b\Phi\left(\frac{1}{k}\int_\alpha^{\alpha+\varepsilon} \frac{1}{\varepsilon w_n(t)} \dd t\right) \mu(x) \dd x.
\end{aligned}
\]
The second and third terms on the righthand side vanish as $\varepsilon \to 0$. To see this, recall that $\alpha$ is a Lebesgue point of $\frac{1}{w_n}$, and hence, as $\varepsilon \to 0$,
\[
\left(\int_x^\alpha \frac{\chi_{[\alpha-\varepsilon,\alpha]}(t)}{\varepsilon w_n(t)}\right)
\leq
\frac{1}{\varepsilon} \int_{\alpha-\varepsilon}^\alpha \frac{1}{w_n(t)} \dd t \longrightarrow \frac{1}{w_n(\alpha)},
\]
and similarly
\[
\left(\int_\alpha^x \frac{\chi_{[\alpha,\alpha+\varepsilon]}(t)}{\varepsilon w_n(t)}\right)
\leq
\frac{1}{\varepsilon} \int_\alpha^{\alpha+\varepsilon} \frac{1}{w_n(t)} \dd t \longrightarrow \frac{1}{w_n(\alpha)}.
\]
Thus we have
\[
\begin{aligned}
\lim_{\varepsilon\to0}\;\int_a^b\Phi\left(\frac{f_\varepsilon(x)}{k}\right) \mu(x) \dd x
&=
\lim_{\varepsilon\to0} \Phi\left(\frac{1}{k}\int_{\alpha-\varepsilon}^\alpha \frac{\nu[\alpha,b]}{\nu[a,\alpha]}\frac{1}{\varepsilon w_n(t)} \dd t\right) \mu[a,\alpha-\varepsilon]\\
&\quad\quad+
\Phi\left(\frac{1}{k}\int_\alpha^{\alpha+\varepsilon} \frac{1}{\varepsilon w_n(t)} \dd t\right)\mu[\alpha+\varepsilon, b]\\
&=
\Phi\left(\frac{\nu[\alpha,b]}{k\nu[a,\alpha]w_n(\alpha)}\right) \mu[a,\alpha]
+
\Phi\left(\frac{1}{k w_n(\alpha)}\right)\mu[\alpha, b].
\end{aligned}
\]
Thus,
\[
\begin{aligned}
\lim_{\varepsilon \to 0}
\left\|f_\varepsilon - \frac{1}{\nu[a,b]} \int_a^b f_\varepsilon \dd \nu \right\|_{L_\mu^\Phi[a,b]}
&=
\lim_{\varepsilon\to 0}\left\|f_\varepsilon\right\|_{L_\mu^\Phi[a,b]}\\
&=
\lim_{\varepsilon\to 0}\inf\left\{k > 0 : \int_a^b\Phi\left(\frac{f_\varepsilon}{k}\right) \dd \mu \leq 1\right\}\\
&= \inf\left\{k > 0 : \Phi\left(\frac{\nu[\alpha,b]}{k\nu[a,\alpha]w_n(\alpha)}\right) \mu[a,\alpha]
+
\Phi\left(\frac{1}{k w_n(\alpha)}\right)\mu[\alpha, b] \leq 1\right\}\\
&= \frac{1}{w_n(\alpha)} 
\left\|\frac{\nu[\alpha,b]}{\nu[a,\alpha]}\chi_{[a,\alpha]} + \chi_{[\alpha,b]}\right\|_{L_\mu^\Phi[a,b]},
\end{aligned}
\]
proving \autoref{eqn:p=1helper}.

Observe also that, because $\nu[a,\alpha] + \nu[\alpha,b] = \nu[a,b]$, we have
\[
\left\|
f_\varepsilon'
\right\|_{L_w^1[a,b]}
=
\frac{\nu[a,b]}{\nu[a,\alpha]}.
\]
This equality, together with \autoref{eqn:CW1=p} and \autoref{eqn:p=1helper}, show that
\[
\begin{aligned}
C &\geq \frac{\nu[a,\alpha]}{\nu[a,b] w_n(\alpha)}\left\|
\frac{\nu[\alpha,b]}{\nu[a,\alpha]}\chi_{[a,\alpha]} + \chi_{[\alpha,b]}
\right\|_{L_\mu^\Phi[a,b]}\\
&= \frac{1}{\nu[a,b]}
\frac{1}{w_n(\alpha)}
\left\|
\nu[\alpha,b]\chi_{[a,\alpha]} + \nu[a,\alpha]\chi_{[\alpha,b]}
\right\|_{L_\mu^\Phi[a,b]}\\
&\geq \frac{1}{\nu[a,b]}
\frac{1}{w_n(\alpha)}
\left\|
\nu[\alpha,b]\chi_{[a,\alpha]} - \nu[a,\alpha]\chi_{[\alpha,b]}
\right\|_{L_\mu^\Phi[a,b]},
\end{aligned}
\]
and since is true for all $a < \alpha < b$, we obtain that $C \geq K_{1,\Phi}(\mu,\nu,w_n)$. Since $w_n \geq w$ for all $n$, letting $n \to \infty$ we have
\[
K_{1,\Phi}(\mu,\nu,w) \leq C < \infty,
\]
completing the proof.
\end{proof}
\section{Sufficiency when \texorpdfstring{$p>1$}{p > 1}}\label{sec:p-suff}
For the rest of the paper, let $1 < p < \infty$. This section is devoted to the proof of \autoref{thm:CW1<p_forward}. We will adapt \cite[Propositions 1.6 \& 1.7]{chuawheeden}, which in turn use results from \cite{opickufner}. Note that the naming conventions of these two texts differ, and in this paper we follow \cite{chuawheeden} as much as possible.

Throughout this section, assume that $\Phi$ is submultiplicative and invertible on $[0,\infty)$, and that that the function $\Lambda$ defined by $\Lambda(t) = \Phi\left(t^{\frac{1}{p}}\right)$ is convex. Note that this implies, in particular, that $\Lambda$ is a Young function.
\begin{lemma}
\label{lemma:opickufneradaptation1}
Define
\[
S = S(p,\Phi, \mu, w) := 
\sup_{a<x<b} 
\left\{
\left[\Phi^{-1}\left(\frac{1}{\mu[x,b]^{1/2}}\right)\right]^{-2}
\left(\int_a^x w(t)^{1-p'}\dd t\right)^{1/p'} 
\right\}.
\]
If $S < \infty$, then
\[
\left\|\int_a^\bullet f(t)\dd t\right\|_{L_\mu^\Phi[a,b]}
\leq
C \left\|f\right\|_{L_w^p[a,b]}
\]
for all $f \in L_w^p[a,b]$ and for some constant $C \leq C_0(\Phi) S$, where $C_0(\Phi)$ is as in \autoref{thm:CW1<p_forward}.
\end{lemma}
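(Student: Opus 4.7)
The plan is to adapt the classical Opic--Kufner proof of the weighted Hardy inequality to the Orlicz setting. First, by taking absolute values and homogeneity of both norms, I would reduce without loss of generality to $f \geq 0$ with $\|f\|_{L_w^p[a,b]} = 1$; writing $F(x) := \int_a^x f(t)\,dt$, the goal becomes to show that $\int_a^b \Phi\bigl(F(x)/(C_0(\Phi) S)\bigr)\,d\mu(x) \leq 1$, by the definition of the gauge norm.

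Next, I would establish a pointwise estimate on $F$ via weighted H\"older's inequality:
\[
F(x) \leq \left(\int_a^x w(t)^{1-p'}\,dt\right)^{1/p'} \left(\int_a^x f(t)^p w(t)\,dt\right)^{1/p}.
\]
The hypothesis $S<\infty$ bounds the first factor by $S\cdot[\Phi^{-1}(\mu[x,b]^{-1/2})]^{-2}$, yielding a pointwise estimate of $F(x)$ in terms of $S$, $\mu[x,b]$, and the local $L_w^p$-norm of $f$.

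Converting this pointwise bound into the Orlicz norm estimate is the heart of the argument. I would perform a dyadic decomposition of $[a,b]$ based on levels of $\mu[\,\cdot\,,b]$---say, choose $a = z_0 < z_1 < \cdots$ with $\mu[z_k,b] = 2^{-k}\mu[a,b]$---and estimate $\int_a^b \Phi(F/k)\,d\mu$ as a sum over the intervals $[z_{k-1}, z_k]$. On each piece, the pointwise bound combined with submultiplicativity of $\Phi$ and supermultiplicativity of $\Phi^{-1}$ should control $\Phi(F/k)$ by a geometric factor of order $2^{-k}$. The ``square root'' in $\mu[x,b]^{1/2}$ and the $-2$ exponent in $S$ are engineered precisely so that, upon applying $\Phi$, the quantity $\Phi([\Phi^{-1}(\mu[x,b]^{-1/2})]^{-2})$ collapses via supermultiplicativity of $\Phi^{-1}$ into something controllable by a dyadic factor, and the resulting geometric sum produces the constant $2/\Phi^{-1}(1/2) = C_0(\Phi)$.

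The main obstacle is extracting this exact constant while correctly handling the local $L_w^p$-norm factor $(\int_a^x f^p w)^{1/p}$ from H\"older: bounded by the global norm $1$, it would produce a bound depending on $\mu[a,b]$, so the dyadic decomposition must respect the increasing structure of $G(x) := \int_a^x f^p w$. The convexity assumption on $\Lambda(t) = \Phi(t^{1/p})$, which makes $\Lambda$ a Young function, should allow one to transfer the $p$th power structure between $\Phi$ and $\Lambda$ at the critical step. Because $\Phi$ is merely submultiplicative rather than multiplicative (as in the power function case of \cite{chuawheeden}), the estimates on each dyadic level are necessarily looser than in the Lebesgue case; this is the source of the gap between $K_{p,\Phi}$ (sufficient) and $\tilde K_{p,\Phi}$ (necessary, from Theorem \ref{thm:CW1<p_backward}) noted in the remark above.
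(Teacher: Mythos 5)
Your overall strategy (a dyadic decomposition on the levels of $\mu[\cdot,b]$) is a genuinely different route from the paper's, but as written it has a gap at exactly the point you yourself flag as ``the heart of the argument,'' and I do not believe it closes the way you suggest. Two concrete problems. First, your H\"older step is the plain one, so the pointwise bound is $F(x)\le S\,[\Phi^{-1}(\mu[x,b]^{-1/2})]^{-2}G(x)^{1/p}$ with $G(x)=\int_a^x f^pw$; you correctly observe that replacing $G(x)$ by the global norm $1$ is fatal, but you never say how the dyadic blocks recover $\sum_k\int_{z_{k-1}}^{z_k}f^pw\,dt=1$ from the local factors. That summation is the entire content of the lemma, and in the Orlicz setting it is not routine: you would need a substitute for the $\ell^{q/p}$-summation used in the Lebesgue case, and ``the decomposition must respect the increasing structure of $G$'' is a restatement of the difficulty, not a resolution of it. Second, the claimed collapse of $\Phi\bigl([\Phi^{-1}(u)]^{-2}\bigr)$ via supermultiplicativity goes the wrong way: submultiplicativity of $\Phi$ gives $\Phi(1/t)\ge\Phi(1)/\Phi(t)$, a lower bound, so you cannot bound $\Phi$ of a reciprocal of $\Phi^{-1}$ from above by a reciprocal of the argument. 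The square root and the exponent $-2$ in $S$ are not engineered for that purpose.

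The paper avoids any decomposition. It runs H\"older with the auxiliary weight $h(t)=(\int_a^tw(y)^{1-p'}dy)^{1/2p'}$ inserted and removed (the Muckenhoupt/Opic--Kufner trick), uses the fundamental theorem of calculus to compute $\int_a^xh^{-p'}w^{1-p'}dt=2h(x)^{p'}$, passes to the Young function $\Lambda(t)=\Phi(t^{1/p})$ via the identity $\|g\|_{L_\mu^\Phi}^p=\|g^p\|_{L_\mu^\Lambda}$, and applies the generalized Minkowski inequality (Lemma \ref{lemma:gen_minkowski}) to pull the $t$-integral outside the norm. The factor $[\Phi^{-1}(1/2)]^{-1}$ in $C_0(\Phi)$ then comes from estimating $\|\Phi^{-1}(\mu[\cdot,b]^{-1/2})\|_{L_\mu^\Phi[t,b]}$ directly, using submultiplicativity in the correct direction, namely $\Phi(\tfrac1k\Phi^{-1}(s))\le\Phi(\tfrac1k)s$, together with the exact computation $\int_t^b\mu[x,b]^{-1/2}\,d\mu(x)=2\mu[t,b]^{1/2}$ --- this is where the square root actually earns its keep. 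If you want to pursue a block decomposition instead, you would need to make the summation step precise (most plausibly by decomposing on levels of $G$ rather than of $\mu$, and invoking convexity of $\Lambda$); as it stands the proposal does not constitute a proof.
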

\begin{proof}
The condition $S < \infty$ implies in particular that
\[
\int_a^t w(y)^{1-p'} \dd y < \infty
\]
for all $t \in (a,b)$. Consequently, $0 < h(t) <\infty$ for all $t \in (a,b)$, where
\[
h(t) := \left(\int_a^t w(y)^{1-p'} \dd y\right)^{1/2p'}.
\]
For all $x \in [a,b]$, we have, by H\"older's inequality,
\[
\begin{aligned}
\int_a^x f(t) \dd t &= \int_a^x \left(f(t)\;w(t)^{1/p} \;h(t)\right)\; \left(h(t)^{-1}\; w(t)^{-1/p}\right) \dd t \\
&\leq 
\left(\int_a^xf(t)^p\; h(t)^p\;w(t) \dd t\right)^{1/p}
\left(\int_a^x h(t)^{-p'}\; w(t)^{1-p'}\dd t\right)^{1/p'}
\end{aligned}
\]
Further, using the fundamental theorem of calculus, we obtain
\[
\begin{aligned}
\int_a^x h(t)^{-p'}\; w(t)^{1-p'}\dd t
&= 
\int_a^x \left(\int_a^t w(y)^{1-p'} \dd y\right)^{-1/2}\; w(t)^{1-p'}\dd t\\
&= 2 \left(\int_a^x w(y)^{1-p'}\dd y\right)^{1/2}\\
&= 2 h(x)^{p'}.
\end{aligned}
\]
Thus,
\[
\int_a^x f(t)\dd t 
\leq 
2^{1/p'}h(x)\left(\int_a^x f(t)^ph(t)^p w(t) \dd t\right)^{1/p},
\]
and so
\begin{equation}
\label{eqn:ok_ineq_1}
\left\|\int_a^\bullet f(t)\dd t\right\|_{L_\mu^\Phi[a,b]}^p
\leq
2^{p/p'}
\left\|
\left(
\int_a^\bullet f(t)^p\;h(t)^p w(t)\; \dd t
\right)^{1/p}
h\right\|_{L_\mu^\Phi[a,b]}^p.
\end{equation}
We claim that
\begin{equation}
\label{eqn:ok_ineq_2}
\left\|
\left(
\int_a^\bullet f(t)^p\;h(t)^pw(t)\; \dd t
\right)^{1/p}
h \right\|_{L_\mu^\Phi[a,b]}^p
\leq
2\int_a^bf(t)^p\;h(t)^p\; w(t)\;
\left\|
h
\right\|_{L_\mu^\Phi[t,b]}^p\dd t.
\end{equation}
To show this, we will use the assumption that $\Lambda(t) = \Phi\left(t^{\frac{1}{p}}\right)$ is convex which, together with the properties of $\Phi$, implies that $\Lambda$ is a Young function. The reader may verify that $\|f\|_{L_\mu^\Phi[a,b]}^p = \|f^p\|_{L_\mu^\Lambda[a,b]}$. Together with \autoref{lemma:gen_minkowski}, this gives
\[
\begin{aligned}
\left\|\left(\int_a^\bullet f(t)^p\;h(t)^pw(t)\; \dd t\right)^{1/p}
h \right\|_{L_\mu^\Phi[a,b]}^p
&=
\left\|\left(\int_a^\bullet f(t)^p\;h(t)^pw(t)\; \dd t\right)
h^p \right\|_{L_\mu^\Lambda[a,b]}\\
&=
\left\|\left(\int_a^b f(t)^p\;h(t)^p \chi_{[a,\bullet]}(t) w(t)\; \dd t\right)
h^p \right\|_{L_\mu^\Lambda[a,b]}\\
&\leq 
2\int_a^b f(t)^p\;h(t)^p w(t) \left\|\chi_{[t,b]}(\bullet)h^p \right\|_{L_\mu^\Lambda[a,b]} \dd t\\
&=
2\int_a^b f(t)^p\;h(t)^p w(t) \left\|h \right\|_{L_\mu^\Phi[t,b]}^p \dd t,
\end{aligned}
\]
proving the claim. Combining \autoref{eqn:ok_ineq_1} and \autoref{eqn:ok_ineq_2}, we have
\begin{equation}
\label{eqn:ok_ineq_3} 
\left\|\int_a^\bullet f(t)\dd t\right\|_{L_\mu^\Phi[a,b]}^p
\leq
2^{1+(p/p')}
\int_a^bf(t)^p\;h(t)^p\; w(t)\;
\left\|
h
\right\|_{L_\mu^\Phi[t,b]}^p\dd t.
\end{equation}
By definition of $S$,
\begin{equation}
\label{eqn:ok_ineq_4}
\left\|h\right\|_{L_\mu^\Phi[t,b]} 
=
\left\|\left(\int_a^\bullet w(y)^{1-p'} \dd y\right)^{1/2p'}\right\|_{L_\mu^\Phi[t,b]} 
\leq 
S^{1/2}\left\|
\Phi^{-1}\left(\frac{1}{\mu[\bullet,b]^{1/2}}\right)\right\|_{L_\mu^\Phi[t,b]}.
\end{equation}
By definition of the gauge norm,
\[
\left\|
\Phi^{-1}\left(\frac{1}{\mu[\bullet,b]^{1/2}}\right)\right\|_{L_\mu^\Phi[t,b]}
=
\inf\left\{k > 0 : \int_t^b\Phi\left(
\frac{1}{k} \Phi^{-1}\left(\frac{1}{\mu[x,b]^{1/2}}\right)
\right) \; \mu(x)\dd x \leq 1\right\}
\]
Using submultiplicativity of $\Phi$, we have
\[
\begin{aligned}
\int_t^b\Phi\left(
\frac{1}{k}\Phi^{-1}\left(\frac{1}{\mu[x,b]^{1/2}}\right)
\right) \; \mu(x)\dd x
&\leq
\Phi\left(
\frac{1}{k}
\right)
\int_t^b\frac{1}{\mu[x,b]^{1/2}} \; \mu(x)\dd x\\
&=\Phi\left(
\frac{1}{k}
\right)
\int_0^{\mu[t,b]} \frac{1}{z^{1/2}}\dd z\\
&= 2\;\Phi\left(
\frac{1}{k}
\right)
\mu[t,b]^{1/2}
\end{aligned}
\]
Setting $2\;\Phi\left(\frac{1}{k}\right)\mu[t,b]^{1/2} = 1$, we obtain
\[
\begin{aligned}
\frac{1}{k} &= \Phi^{-1}\left(\frac{1}{2} \mu[t,b]^{-1/2}\right)\\
&\geq 
\Phi^{-1}\left(\frac{1}{2}\right)
\Phi^{-1}\left(\mu[t,b]^{-1/2}\right)
\end{aligned}
\]
Note that the ultimate inequality uses the supermultiplicativity of $\Phi^{-1}$, which in turn follows from the submultiplicativity of $\Phi$. Thus, we have that
\[
\begin{aligned}
\left\|
\Phi^{-1}\left(\frac{1}{\mu[\bullet,b]^{1/2}}\right)\right\|_{L_\mu^\Phi[t,b]} 
&\leq k\\
&\leq \left[\Phi^{-1}\left(\frac{1}{2}\right)
\Phi^{-1}\left(\mu[t,b]^{-1/2}\right)\right]^{-1}\\
&\leq
\left[\Phi^{-1}\left(\frac{1}{2}\right)\right]^{-1} S^{1/2}
\left[\left(\int_a^t w(y)^{1-p'}\dd y\right)^{1/p'}\right]^{-1/2}\\
&= \left[\Phi^{-1}\left(\frac{1}{2}\right)\right]^{-1} S^{1/2}
\left(h(t)^2\right)^{-1/2}\\
&= \left[\Phi^{-1}\left(\frac{1}{2}\right)\right]^{-1} S^{1/2}
\frac{1}{h(t)}.
\end{aligned}
\]
Together with \autoref{eqn:ok_ineq_3} and \autoref{eqn:ok_ineq_4}, this gives
\[
\begin{aligned}
\left\|\int_a^\bullet f(t)\dd t\right\|_{L_\mu^\Phi[a,b]}^p
&\leq
2^{1+(p/p')}
\int_a^bf(t)^p\;h(t)^p\; w(t)\;
\left(S^{1/2}\;\left[\Phi^{-1}\left(\frac{1}{2}\right)\right]^{-1} S^{1/2}\;\frac{1}{h(t)}\right)^p\dd t\\
&=
2^{1+(p/p')}
\int_a^bf(t)^p\;h(t)^p\; w(t)\;
S^{p/2}\;\left[\Phi^{-1}\left(\frac{1}{2}\right)\right]^{-p} S^{p/2}\;\frac{1}{h(t)^p}\dd t\\
&=
2^{1+(p/p')}
\left[\Phi^{-1}\left(\frac{1}{2}\right)\right]^{-p}S^p \;
\int_a^b f(t)^p\; w(t)^p\dd t.
\end{aligned}
\]
Taking $p$th roots, we have
\[
\begin{aligned}
\left\|\int_a^\bullet f(t)\dd t\right\|_{L_\mu^\Phi[a,b]}
&\leq
2
\left[\Phi^{-1}\left(\frac{1}{2}\right)\right]^{-1}
\; S \;\left(\int_a^b f(t)^p\; w(t)^p\dd t\right)^{1/p}\\
&=
C_0(\Phi)
S \;
\|f\|_{L_w^p[a,b]},
\end{aligned}
\]
as desired.
\end{proof}
\begin{lemma}
Define
\[
T = T(p,\Phi, \mu, w) := 
\sup_{a<x<b} 
\left\{
\left[\Phi^{-1}\left(\frac{1}{\mu[a,x]^{1/2}}\right)\right]^{-2}
\left(\int_x^b w^{1-p'}(t)\dd t\right)^{1/p'} 
\right\}.
\]
If $T< \infty$, then
\[
\left\|\int_\bullet^b f(t)\dd t\right\|_{L_\mu^\Phi[a,b]}
\leq
C \left\|f\right\|_{L_w^p[a,b]},
\]
for all $f \in L_w^p[a,b]$ and for some constant $C \leq C_0(\Phi) T$, where $C_0(\Phi)$ is as in \autoref{thm:CW1<p_forward}.
\end{lemma}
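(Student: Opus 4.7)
The statement is the mirror image of Lemma \ref{lemma:opickufneradaptation1}, with the roles of the endpoints $a$ and $b$ swapped. My plan is to rerun the four-step argument of that lemma with every integral $\int_a^\bullet$ replaced by $\int_\bullet^b$, every $\int_a^x$ by $\int_x^b$, and $\mu[x,b]$ by $\mu[a,x]$. (A change of variables $y \mapsto a+b-y$ would, in principle, reduce the claim directly to the previous lemma, but tracking the reflected weights and measures is more cumbersome than rerunning the argument in place.)

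\textbf{Steps 1 and 2.} Define
\[
\tilde{h}(t) := \left(\int_t^b w(y)^{1-p'} \dd y\right)^{1/2p'},
\]
which is positive and finite on $(a,b)$ by the assumption $T < \infty$. H\"older's inequality applied on $[x,b]$, together with the fundamental theorem of calculus computation $\int_x^b \tilde{h}(t)^{-p'} w(t)^{1-p'}\dd t = 2\tilde{h}(x)^{p'}$ (the minus sign from differentiating $\int_t^b w^{1-p'}$ is cancelled by the fact that $\tilde{h}(b)=0$), gives
\[
\int_x^b f(t)\dd t \;\leq\; 2^{1/p'}\;\tilde{h}(x)\left(\int_x^b f(t)^p\;\tilde{h}(t)^p\; w(t)\dd t\right)^{1/p}.
\]
Raising to the $p$th power, passing from the $L^\Phi$-norm to the $L^\Lambda$-norm via the identity $\|F\|_{L_\mu^\Phi}^p = \|F^p\|_{L_\mu^\Lambda}$, and applying Lemma \ref{lemma:gen_minkowski} (using $\chi_{[\bullet,b]}(t) = \chi_{[a,t]}(\bullet)$ instead of $\chi_{[a,\bullet]}(t) = \chi_{[t,b]}(\bullet)$) then yields
\[
\left\|\left(\int_\bullet^b f(t)^p\;\tilde{h}(t)^p w(t)\dd t\right)^{1/p}\tilde{h}\right\|_{L_\mu^\Phi[a,b]}^p \;\leq\; 2\int_a^b f(t)^p\;\tilde{h}(t)^p\; w(t)\;\|\tilde{h}\|_{L_\mu^\Phi[a,t]}^p\dd t.
\]

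\textbf{Steps 3 and 4.} Bound the inner norm $\|\tilde{h}\|_{L_\mu^\Phi[a,t]}$: by the definition of $T$,
\[
\|\tilde{h}\|_{L_\mu^\Phi[a,t]} \;\leq\; T^{1/2}\;\left\|\Phi^{-1}\!\left(\frac{1}{\mu[a,\bullet]^{1/2}}\right)\right\|_{L_\mu^\Phi[a,t]},
\]
and using submultiplicativity of $\Phi$ together with the substitution $z = \mu[a,x]$ (so that $\int_a^t \mu[a,x]^{-1/2}\,\mu(x)\dd x = 2\mu[a,t]^{1/2}$), then supermultiplicativity of $\Phi^{-1}$, one obtains $\|\tilde{h}\|_{L_\mu^\Phi[a,t]} \leq [\Phi^{-1}(\tfrac12)]^{-1}\, T^{1/2}/\tilde{h}(t)$ exactly as in the previous lemma. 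Substituting this back into the previous display and taking $p$th roots delivers the claim with constant $C_0(\Phi)\,T$. There is no genuinely new obstacle; the only points requiring care are the sign bookkeeping in the fundamental theorem of calculus computation for $\tilde{h}$ and the swap $\chi_{[\bullet,b]}(t) = \chi_{[a,t]}(\bullet)$ when invoking Lemma \ref{lemma:gen_minkowski}.
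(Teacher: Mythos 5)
Your proposal is correct and is exactly the mirror-image argument that the paper intends when it declares this proof "nearly identical" to that of Lemma \ref{lemma:opickufneradaptation1} and leaves it to the reader; the sign bookkeeping in the fundamental theorem of calculus step and the swap $\chi_{[\bullet,b]}(t)=\chi_{[a,t]}(\bullet)$ are indeed the only points where the reflection matters, and you handle both correctly.
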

\begin{proof}
The proof of this inequality is nearly identical to that of \autoref{lemma:opickufneradaptation1}, and is left as an exercise to the reader.
\end{proof}
The previous two lemmas lead to the following corollary, which is analogous to (one direction of) \cite[Propositions 1.6 \& 1.7]{chuawheeden}.
\noindent
\begin{corollary}
\label{cor:CWintermediate}
Let $\tau$ be a weight on $[a,b]$, and let $C_0(\Phi)$ be as in the statement of \autoref{thm:CW1<p_forward}.
\begin{enumerate}[label=(\roman*)]
\item Define
\[
S(p,\Phi,\mu,\tau,w) = 
\sup_{a<x<b} 
\left\{
\left[\Phi^{-1}\left(\frac{1}{\mu[x,b]^{1/2}}\right)\right]^{-2}
\left(\int_a^x \tau(t)^{p'}\;w(t)^{1-p'}\dd t\right)^{1/p'}
\right\}.
\]
If $S(p,\Phi, \mu, \tau, w) < \infty$, then
\[
\left\|\int_a^\bullet f(t)\tau(t)\dd t\right\|_{L_\mu^\Phi[a,b]}
\leq
C\left\|f\right\|_{L_w^p[a,b]}
\]
for all $f \in L_w^p[a,b]$ and for some constant $C \leq C_0(\Phi) \;S(p,\Phi, \mu, \tau, w)$.
\item Define
\[
T(p,\Phi, \mu, \tau, w) := 
\sup_{a<x<b} 
\left\{
\left[\Phi^{-1}\left(\frac{1}{\mu[a,x]^{1/2}}\right)\right]^{-2}
\left(\int_x^b \tau(t)^{p'}\;w(t)^{1-p'}\dd t\right)^{1/p'}
\right\}.
\]
If $T(p,\Phi, \mu, \tau, w) < \infty$, then
\[
\left\|\int_\bullet^b f(t)\tau(t)\dd t\right\|_{L_\mu^\Phi[a,b]}
\leq
C\left\|f\right\|_{L_w^p[a,b]},
\]
for all $f \in L_w^p[a,b]$ and for some constant $C \leq C_0(\Phi) \; T(p,\Phi, \mu, \tau, w)$.
\end{enumerate}
\end{corollary}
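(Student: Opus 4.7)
The plan is to reduce each part of the corollary directly to the corresponding lemma already established, via a change of weight that absorbs $\tau$ into the base weight $w$. For part (i), given $f \in L_w^p[a,b]$, I would set
\[
g(t) := f(t)\tau(t), \qquad \tilde w(t) := \tau(t)^{-p}\, w(t).
\]
Using the identity $pp' = p + p'$, which gives $p(1-p') = -p'$, I have
\[
\tilde w(t)^{1-p'} = \tau(t)^{-p(1-p')}\, w(t)^{1-p'} = \tau(t)^{p'}\, w(t)^{1-p'}.
\]
Therefore the constant $S(p,\Phi,\mu,\tilde w)$ appearing in \autoref{lemma:opickufneradaptation1} equals the constant $S(p,\Phi,\mu,\tau,w)$ of the present statement, and a one-line calculation gives $\|g\|_{L_{\tilde w}^p[a,b]}^p = \int_a^b f^p \tau^p \tau^{-p} w\,\dd t = \|f\|_{L_w^p[a,b]}^p$.

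Under the hypothesis $S(p,\Phi,\mu,\tau,w) < \infty$, I would then apply \autoref{lemma:opickufneradaptation1} to the pair $(g,\tilde w)$ to obtain
\[
\left\|\int_a^\bullet f(t)\tau(t)\,\dd t\right\|_{L_\mu^\Phi[a,b]} = \left\|\int_a^\bullet g(t)\,\dd t\right\|_{L_\mu^\Phi[a,b]} \leq C_0(\Phi)\, S(p,\Phi,\mu,\tilde w)\, \|g\|_{L_{\tilde w}^p[a,b]},
\]
which, after the substitutions above, is exactly the conclusion of part (i) with the claimed constant. Part (ii) follows by the identical substitution, invoking the second lemma in place of \autoref{lemma:opickufneradaptation1}; the computation of $\tilde w^{1-p'}$ is the same, and only the endpoints of integration and the position of $x$ in the $\mu$-interval change.

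The only technical wrinkle is ensuring $\tilde w$ is a bona fide weight. If $\tau$ vanishes on a set of positive measure, then $\tau^{-p}w$ is formally infinite there; however on this set $f\tau \equiv 0$, so the integrand on the left-hand side is unaffected, and one can legitimately restrict to $\{\tau > 0\}$ throughout the argument. Alternatively, one regularizes by replacing $\tau$ with $\tau + \varepsilon$, applies the above argument for each $\varepsilon > 0$, and passes to the limit $\varepsilon \to 0^+$ using monotone convergence on both sides. I do not expect any real obstacle here: the substantive analytic content — the dyadic device with $h(t) = (\int_a^t w^{1-p'})^{1/2p'}$, the generalized Minkowski inequality, and submultiplicativity of $\Phi$ — has already been deployed in the proofs of the two lemmas, so the corollary is essentially a bookkeeping reduction.
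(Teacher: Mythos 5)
Your proposal is correct and is exactly the paper's argument: the paper's proof consists of the single sentence ``replace $f$ and $w$ by $f\tau$ and $w/\tau^p$,'' and you have simply spelled out the verification that $\tilde w^{1-p'} = \tau^{p'}w^{1-p'}$ and $\|f\tau\|_{L_{\tilde w}^p} = \|f\|_{L_w^p}$, plus the harmless caveat about zeros of $\tau$. No changes needed.
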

\begin{proof}
The statements (i) and (ii) follow from the previous two lemmas, by replacing $f$ and $w$ by $f\tau$ and $w/\tau^p$ respectively.
\end{proof}
Finally, we are ready to prove \autoref{thm:CW1<p_forward}.
\begin{proof}[Proof of \autoref{thm:CW1<p_forward}]
Observe,
\[
\begin{aligned}
\left\|f - \frac{1}{\nu[a,b]}\int_a^b f \;dv\right\|_{L_\mu^\Phi[a,b]} 
&=\left\|
\frac{1}{\nu[a,b]}
\int_a^\bullet \nu[a,z]^{p'} f'(z) \dd z
+
\int_\bullet^b \nu[z,b]^{p'} f'(z) \dd z\right\|_{L_\mu^\Phi[a,b]}\\
&\leq 
\frac{1}{\nu[a,b]}
\left[
\left\|\int_a^\bullet \nu[a,z]^{p'} f'(z) \dd z\right\|_{L_\mu^\Phi[a,b]}
+
\left\|\int_\bullet^b \nu[z,b]^{p'} f'(z) \dd z\right\|_{L_\mu^\Phi[a,b]}
\right]\\
&\leq
\frac{C_0(\Phi)}{\nu[a,b]}
\left(S(p,\Phi, \mu, \nu[a,\bullet], w) + T(p,\Phi, \mu, \nu[\bullet,b], w)\right)\|f'\|_{L_w^p[a,b]}\\
&=
C_0(\Phi) K_{p,\Phi}(\mu,\nu, w)\|f'\|_{L_w^p[a,b]},
\end{aligned}
\]
where the penultimate inequality uses \autoref{cor:CWintermediate}.
\end{proof}
\section{Necessity when \texorpdfstring{$p>1$}{p > 1}}\label{sec:p-necc}
Throughout this section, assume that $\Phi$ is invertible on $[0,\infty)$.

\begin{proof}[Proof of \autoref{thm:CW1<p_backward}]
Assume that for all Lipschitz continuous functions $f$, there exists some constant $C > 0$ such that
\[
\left\|f - \frac{1}{\nu[a,b]}\int_a^b f \dd \nu\right\|_{L_\mu^\Phi[a,b]}
\leq
C 
\left\|f'\right\|_{L_w^p[a,b]}.
\]
Fix $\alpha \in (a,b)$, and define
\[
\begin{aligned}
f_1(x) &= \int_a^x \nu[a,t]^{p'-1}w_n(t)^{1-p'} \chi_{[a,\alpha]}(t) \dd t,\\
f_2(x) &= \int_a^x \nu[t,b]^{p'-1}w_n(t)^{1-p'} \chi_{[\alpha,b]}(t) \dd t.
\end{aligned}
\]
Note that
\[
\begin{aligned}
\left\|f_1 - \frac{1}{\nu[a,b]}\int_a^b f_1 \dd \nu\right\|_{L_\mu^\Phi[a,b]}
&= 
\left\|
\frac{1}{\nu[a,b]}\left(\int_a^\bullet 
\nu[a,z]f_1'(z) \dd z - \int_\bullet^b \nu[z,b] f_1'(z) \dd z
\right)\right\|_{L_\mu^\Phi[a,b]}\\
&\geq
\left\|
\frac{1}{\nu[a,b]}\left(\int_a^\bullet 
\nu[a,z]f_1'(z) \dd z - \int_\bullet^b \nu[z,b] f_1'(z) \dd z
\right)\right\|_{L_\mu^\Phi[\alpha,b]}\\
&=
\left\|
\frac{1}{\nu[a,b]}\int_a^\bullet 
\nu[a,z]f_1'(z) \dd z
\right\|_{L_\mu^\Phi[\alpha,b]}\\
&=
\left\|
\frac{1}{\nu[a,b]}\int_a^\bullet 
\nu[a,z]\nu[a,z]^{p'-1}w_n(z)^{1-p'} \chi_{[a,\alpha]}(z) \dd z
\right\|_{L_\mu^\Phi[\alpha,b]}\\
&=
\left\|
\frac{1}{\nu[a,b]}\int_a^\alpha
\nu[a,z]^{p'}w_n(z)^{1-p'} \dd z
\right\|_{L_\mu^\Phi[\alpha,b]}\\
&=
\frac{1}{\nu[a,b]}\left(\int_a^\alpha
\nu[a,z]^{p'}w_n(z)^{1-p'} \dd z\right)
\left\|1\right\|_{L_\mu^\Phi[\alpha,b]}\\
&=
\frac{1}{\nu[a,b]}\left(\int_a^\alpha
\nu[a,z]^{p'}w_n(z)^{1-p'} \dd z\right)
\left[\Phi^{-1}\left(\frac{1}{\mu[\alpha,b]}\right)\right]^{-1}
\end{aligned}
\]
On the other hand,
\[
\left\|f_1'\right\|_{L_{w_n}^p[a,b]}^p
=
\int_a^b |f_1'(z)|^p w_n(z) \dd z
= 
\int_a^\alpha \nu[a,z]^{p'} w_n(z)^{1-p'} \dd z
\]
by definition of $f_1$. Hence, by the Poincaré inequality,
\[
\frac{1}{\nu[a,b]}\left(\int_a^\alpha
\nu[a,z]^{p'}w_n(z)^{1-p'} \dd z\right)
\left[\Phi^{-1}\left(\frac{1}{\mu[\alpha,b]}\right)\right]^{-1}
\leq
C
\left(\int_a^\alpha \nu[a,z]^{p'} w_n(z)^{1-p'} \dd z\right)^{1/p}.
\]
This implies
\[
\frac{1}{\nu[a,b]}\left(\int_a^\alpha
\nu[a,z]^{p'}w_n(z)^{1-p'} \dd z\right)^{1/p
'}
\left[\Phi^{-1}\left(\frac{1}{\mu[\alpha,b]}\right)\right]^{-1}
\leq
C.
\]
Letting $n \to \infty$, we see that, by the monotone convergence theorem, the above holds with $w_n$ replaced by $w$.

Applying a similar argument to $f_2$, we have
\[
\frac{1}{\nu[a,b]}\left(\int_\alpha^b
\nu[z,b]^{p'}w(z)^{1-p'} \dd z\right)^{1/p
'}
\left[\Phi^{-1}\left(\frac{1}{\mu[a,\alpha]}\right)\right]^{-1}
\leq
C.
\]
Taking the supremum over all $a < \alpha < b$, we see that $\tilde{K}_{p,\Phi}(\mu,\nu,w) \leq 2C$, i.e.
\[
\frac{1}{2}\tilde{K}_{p,\Phi}(\mu,\nu,w)
\leq C,
\]
completing the proof.
\end{proof}
\section{Example}
Let $0 = a < b < \infty$, $1 < p < \infty$, and let $\nu = \mu = w$, where
\[
w(x)=\frac{2e^{-\frac{1}{x^2}}}{x^3}.
\]
Note that $w$ is infinitely degenerate at the origin, and that
\[
\int_{0}^{x}w(t)dt=e^{-\frac{1}{x^2}}.
\]
In this section, we show that this choice of $w$ is an example of a weight that satisfies the $(\Phi, p)$ Poincar\'e inequality, but not the $(q,p)$ Poincar\'e inequality for any $q > p$.

We first show that the constant
\begin{equation}
\label{eqn:workedexample_K_pp}
\begin{split}
K_{p,p}(w,w,w)=&\frac{1}{w[0,b]}\left(\sup_{0<x<b}\left[w[x,b]^{1/p}\left(\int_{0}^{x}w[0,t]^{p'}w(t)^{1-p'} \dd t\right)^{1/p'}\right]\right.\\
&+\sup_{0<x<b}\left.\left[w[0,x]^{1/p}\left(\int_{x}^{b}w[t,b]^{p'}w(t)^{1-p'} \dd t\right)^{1/p'}\right]\right)
\end{split}
\end{equation}
is finite, implying, by \cite[Theorem 1.4]{chuawheeden}, that the $(p,p)$ Poincar\'e inequality holds for all Lipschitz continuous functions $f \colon [0,b] \to \R$. Observe that, for all $x \in (0,b)$, we have
\[
\begin{aligned}
&w[x,b]^{1/p}\left(\int_{0}^{x}w[0,t]^{p'}w(t)^{1-p'} \dd t\right)^{1/p'}\\
=\;&\left(e^{-\frac{1}{b^2}} - e^{-1/x^2}\right)^{1/p}
\left(\int_0^x e^{-p'/t^2 - 1/t^2 + p'/t^2} \left(\frac{2}{t^3}\right)^{1-p'}\dd t\right)^{1/p'}\\
=\;&2^{1-p'}\left(e^{-\frac{1}{b^2}} - e^{-1/x^2}\right)^{1/p}
\left(\int_0^x e^{-1/t^2} t^{3(p'-1)}\dd t\right)^{1/p'}\\
\leq\;& 2^{1-p'}e^{-1/pb^2}
\left(\int_0^b e^{-1/t^2} t^{3(p'-1)}\dd t\right)^{1/p'}\\
<\;&\infty,
\end{aligned}
\]
since the last integrand is increasing on $[0,b]$, and hence bounded. As such, the first supremum in \autoref{eqn:workedexample_K_pp} is finite. Also observe that
\[
\begin{aligned}
&w[0,x]^{1/p}\left(\int_{x}^{b}w[t,b]^{p'}w(t)^{1-p'} \dd t\right)^{1/p'}\\
=\;&e^{-\frac{1}{px^2}}
\left(\int_x^b \left(e^{-\frac{1}{b^2}-\frac{1}{p't^2}+1/t^2} - e^{-\frac{1}{p't^2}}\right)^{p'}\left(\frac{2}{t^3}\right)^{1-p'}\dd t\right)^{1/p'}\\
=\;&2^{1-p'}e^{-\frac{1}{px^2}}
\left(\int_x^b \left(e^{-\frac{1}{b^2} + \frac{1}{pt^2}} - e^{-\frac{1}{p't^2}}\right)^{p'}t^{3(p'-1)}\dd t\right)^{1/p'}\\
\leq\;& e^{-\frac{1}{px^2}}
\left(\int_x^b \left(e^{\frac{1}{pt^2}}\right)^{p'}t^{3(p'-1)}\dd t\right)^{1/p'}\\
\end{aligned}
\]
For $t \in [x,b]$, we see that
\[
e^{-\frac{1}{px^2} + \frac{1}{pt^2}} \leq e^{-\frac{1}{px^2} + \frac{1}{px^2}} = 1,
\]
and hence the second supremum in \autoref{eqn:workedexample_K_pp} is also finite. Thus $K_{p,p}(w,w,w) < \infty$, yielding the $(p,p)$ Poincar\'e inequality.

Next, we consider $K_{p,q}(w,w,w)$ for some $q > p$. The first supremum in the constant is finite by a similar argument as above; however, we are less fortunate with the second supremum. Observe that
\[
\begin{aligned}
&w[0,x]^{1/q}\left(\int_x^b w[t,b]^{p'}w(t)^{1-p'} \dd t\right)^{1/p'}\\
=\;&2^{1-p'}e^{-\frac{1}{qx^2}}
\left(\int_x^b \left(e^{-\frac{1}{b^2} + \frac{1}{pt^2}} - e^{-\frac{1}{p't^2}}\right)^{p'}t^{3(p'-1)}\dd t\right)^{1/p'}\\
=\;&2^{1-p'}
\left(\int_x^b \left(\left(e^{-\frac{1}{b^2}} - e^{-\frac{1}{t^2}}\right)e^{-\frac{1}{qx^2} + \frac{1}{pt^2}}\right)^{p'}t^{3(p'-1)}\dd t\right)^{1/p'}\\
\end{aligned}
\]
Let $\varepsilon \in \R^+$ be such that $\frac{1}{q} < \varepsilon < \frac{1}{p}$, and choose $t_0 \in (x,b)$ such that $\frac{1}{pt_0^2} = \frac{\varepsilon}{x^2}$. Then $t_0 = \sqrt{\frac{x^2}{\varepsilon p}}$, and so
\[
\int_x^b \left(\left(e^{-\frac{1}{b^2}} - e^{-\frac{1}{t^2}}\right)e^{-\frac{1}{qx^2} + \frac{1}{pt^2}}\right)^{p'}t^{3(p'-1)}\dd t 
\geq 
\int_x^{\sqrt{\frac{x^2}{\varepsilon p}}}
\left(\left(e^{-\frac{1}{b^2}} - e^{-\frac{1}{t^2}}\right)e^{-\frac{1}{qx^2} + \frac{1}{pt^2}}\right)^{p'}t^{3(p'-1)}\dd t
\]
Observe that $g(t) := \left(e^{-\frac{1}{b^2}} - e^{-\frac{1}{t^2}}\right)e^{-\frac{1}{qx^2} + \frac{1}{pt^2}}$ is a decreasing function of $t$, and hence for $t \in \left(x, \sqrt{\frac{x^2}{\varepsilon p}}\right)$
\[
\begin{aligned}
g(t) &\geq g(t_0)\\
&= \left(e^{-\frac{1}{b^2}} - e^{-\frac{\varepsilon p}{x^2}}\right)e^{\frac{1}{x^2}\left(\varepsilon - \frac{1}{q}\right)}
\end{aligned}
\]
where the second factor of the righthand side blows up as $x \to 0$, since $\varepsilon - \frac{1}{q} > 0$. Combining this with the above, we obtain
\[
\begin{aligned}
&w[0,x]^{1/q}\left(\int_x^b w[t,b]^{p'}w(t)^{1-p'} \dd t\right)^{1/p'}\\
\geq\;& 2^{1-p'}g(t_0) \left(\int_x^{t_0} t^{3(p'-1)} \dd t\right)^{1/p'}.
\end{aligned}
\]
Since the integral above is positive, the quantity blows up as $x \to 0$ because of the factor of $g(t_0)$. That is, $K_{p,q}(w,w,w)$ is not finite, so the $(q,p)$ Poincar\'e inequality does not hold.

Finally, we show that $K_{p, \Phi}(w,w,w) < \infty$, for an appropriate choice of $\Phi$ depending on $p$. Let $\alpha \in \left(0,\frac{3}{4}p\right)$, and define
\[
    \Phi(t) =
    \begin{cases}
    |t|^p\left(\ln|t|\right)^\alpha, &\text{if $|t| \geq e^{2\alpha};$}\\
    |t|^p\left(2\alpha\right)^\alpha, &\text{if $|t| < e^{2\alpha}.$}
    \end{cases}
\]
Similar to \cite[Conclusion 26]{KRSSh} it can be shown that $\Phi$ is a submultiplicative piecewise differentiable convex function that vanishes at $0$. In particular, convexity follows from the fact that
\[
\begin{aligned}
\frac{\dd}{\dd t}
\left(|t|^p\left(\ln|t|\right)^\alpha\right)\Bigg\vert_{t=e^{2\alpha}}
&=
\left(e^{2\alpha}\right)^{p-1}
\cdot \left(2\alpha\right)^\alpha 
\cdot \left(p + \frac{1}{2}\right)\\
&\geq
\left(e^{2\alpha}\right)^{p-1}
\cdot \left(2\alpha\right)^\alpha\\
&=
\frac{\dd}{\dd t}
\left(|t|^p\left(2\alpha\right)^\alpha\right)\Bigg\vert_{t=e^{2\alpha}}.
\end{aligned}
\]
Moreover,
\[
\Lambda(t) = \Phi\left(|t|^{\frac{1}{p}}\right) = 
\begin{cases}
    |t|\left(\frac{1}{p}\ln|t|\right)^\alpha, &\text{if $|t| \geq e^{2\alpha p};$}\\
    |t|\left(2\alpha\right)^\alpha, &\text{if $|t| < e^{2\alpha p}.$}
\end{cases}
\]
is convex, and is therefore a Young function. Since the assumptions of \autoref{thm:CW1<p_forward} are satisfied, we may show that the $(\Phi, p)$ Poincar\'e inequality holds for this choice of $\Phi$ by showing that $K_{p, \Phi}(w,w,w) < \infty$.

For large enough $t$, note that
\[
\Phi^{-1}(t) \approx \left(\frac{t}{\ln(t)^\alpha}\right)^{1/p}.
\]
Consider the constant $K_{p,\Phi}(w,w,w)$ as defined in \autoref{thm:CW1<p_forward}. The first supremum is finite as in the previous cases, so it remains to examine the second supremum. Note that
\[
\begin{aligned}
\Phi^{-1}\left(\frac{1}{w[0,x]^{1/2}}\right)
&= \Phi^{-1}\left(e^{\frac{1}{2x^2}}\right)\\
&\approx \left(x^{2\alpha}e^{\frac{1}{2x^2}}\right)^{1/p}\\
&= x^{\frac{2\alpha}{p}}e^{\frac{1}{2x^2p}}
\end{aligned}
\]
and so
\[
\left[\Phi^{-1}\left(\frac{1}{w[0,x]^{1/2}}\right)\right]^{-2}
\approx
x^{-\frac{4\alpha}{p}}e^{-\frac{1}{px^2}}
\]
Hence,
\[
\begin{aligned}
&\left[\Phi^{-1}\left(\frac{1}{w[0,x]^{1/2}}\right)\right]^{-2}\biggl(\int_x^b w[t,b]^{p'} w(t)^{1-p'} \dd t\biggr)^{1/p'}\\
\approx\;&
x^{-\frac{4\alpha}{p}}e^{-\frac{1}{px^2}}
\left(\int_x^b \left(e^{-\frac{1}{b^2} + \frac{1}{pt^2}} - e^{-\frac{1}{p't^2}}\right)^{p'}t^{3(p'-1)}\dd t\right)^{1/p'}\\
=\;&e^{-\frac{1}{b^2}}\left(\int_{x}^{b}\left(e^{\frac{1}{pt^2}-\frac{1}{px^2}}-e^{\frac{1}{b^2}-\frac{1}{p't^2}-\frac{1}{px^2}}\right)^{p'}x^{-4\alpha(p'-1)}t^{3(p'-1)}dt\right)^{1/p'}.
\end{aligned}
\]
 The term we need to estimate is therefore
 \[
\int_{x}^{b} e^{-(p'-1)\left(\frac{1}{x^2}-\frac{1}{t^2}\right)}x^{-4\alpha(p'-1)}t^{3(p'-1)}dt
=
C\int_{x}^{b} e^{-(p'-1)\left(\frac{1}{x^2}-\frac{1}{t^2}\right)}x^{-4\alpha(p'-1)}t^{3p'}d\left(-\frac{1}{t^2}\right).
 \]
 We split the integral into two,
 \[
\int_{x}^{b} e^{-(p'-1)\left(\frac{1}{x^2}-\frac{1}{t^2}\right)}x^{-4\alpha(p'-1)}t^{3p'}d\left(-\frac{1}{t^2}\right)
=
\int_{x}^{\delta(x)} +\int_{\delta(x)}^{b} e^{-(p'-1)\left(\frac{1}{x^2}-\frac{1}{t^2}\right)}x^{-4\alpha(p'-1)}t^{3p'}d\left(-\frac{1}{t^2}\right)=:I+II,
 \]
 where $\delta(x)>x$ will be determined later. For the first term we have
 \[
 I\leq x^{-4\alpha(p'-1)}\delta(x)^{3p'}\int_{x}^{\delta(x)}e^{-(p'-1)\left(\frac{1}{x^2}-\frac{1}{t^2}\right)}d\left(-\frac{1}{t^2}\right).
 \]
 We now choose $\delta(x)$ so that $\delta(x)^{3p'}=x^{4\alpha(p'-1)}$, i.e.
 \[
 \delta(x)=x^{\frac{4\alpha}{3p}}.
 \]
 Note that $\delta(x) > x$ if $x < 1$, since $\alpha < \frac{3}{4}p$. This gives
 \[
  I\leq \int_{x}^{\delta(x)}e^{-(p'-1)\left(\frac{1}{x^2}-\frac{1}{t^2}\right)}d\left(-\frac{1}{t^2}\right)
  = Ce^{-(p'-1)\left(\frac{1}{x^2}-\frac{1}{t^2}\right)}\Big|_{t=x}^{\delta(x)}
  = -C\left(1-e^{-(p'-1)\left(\frac{1}{x^2}-\frac{1}{x^{\frac{8\alpha}{3p}}}\right)}\right)
  \to C,
 \]
 as $x \to 0$, since $\frac{1}{x^2}-\frac{1}{x^{\frac{8\alpha}{3p}}}>0$.
 For II, we have
 \[
     \begin{aligned}
 II &= 
 \int_{\delta(x)}^{b} e^{-4\alpha(p'-1)\left(\frac{1}{x^2}-\frac{1}{t^2}\right)}x^{-(p'-1)}t^{3(p'-1)}dt\\
 &\leq x^{-4\alpha(p'-1)}e^{-(p'-1)\left(\frac{1}{x^2}-\frac{1}{x^{8\alpha/(3p)}}\right)}\int_{\delta(x)}^{b}t^{3(p'-1)}dt\\
 &\leq Cx^{-4\alpha(p'-1)}e^{-(p'-1)\left(\frac{1}{x^2}-\frac{1}{x^{8\alpha/(3p)}}\right)}\to 0,
\end{aligned}
 \]
 as $x \to 0$. So, the $(\Phi, p)$ Poincar\'e inequality indeed holds for this choice of $w$.
\newpage
\bibliography{biblio}
\bibliographystyle{abbrv}
\nocite{*}
\vspace{+0.25in}
\end{document}